\newcommand{\EE}{{\mathbf E}}
\newcommand{\dd}{ { \mathrm{d}} }
\newcommand{\cF}{{ \mathcal F}}
\newcommand{\cD}{{ \mathcal D}}
\newcommand{\HS}{\mathrm{HS}}
\newcommand{\ts}{k}
\newcommand{\dt}{\ts}
\newcommand{\Wj}{\Delta W^j}
\newcommand{\dH}{\dot{H}}
\newcommand{\wH}{\dH}
\newcommand{\Hoz}{H_0^1}
\newcommand{\N}{\mathbb{N}}
\DeclareMathOperator*{\Dom}{\mathrm{Dom}}
\newcommand{\dom}{\Dom}
\newtheorem{thm}{Theorem} [section]
\newtheorem{cor} [thm] {Corollary}
\newtheorem{lem} [thm] {Lemma}
\renewcommand{\email}[2][]{%
  \ifx\emails\@empty\relax\else{\g@addto@macro\emails{,\space}}\fi%
  \@ifnotempty{#1}{\g@addto@macro\emails{\textrm{(#1)}\space}}%
  \g@addto@macro\emails{#2}%
}
\begin{document}

\title[Pathwise existence]{Pathwise existence of solutions to the Implicit Euler method
  for the stochastic Cahn-Hilliard Equation}

\author{Daisuke Furihata}
\email{furihata@cmc.osaka-u.ac.jp}
\author{Fredrik Lindgren}%
\address[Furihata and Lindgren]{{Cybermedia Center, Osaka University} {1-32 Machikaneyama,
Toyonaka, Osaka 560-0043, Japan}}
\email{fredrik.lindgren1979@gmail.com}

\author{Shuji Yoshikawa}
\address[Yoshikawa]{{Graduate School of Science and Engineering, Ehime
  University} {3 Bunkyo-cho, Matsuyama, Ehime 790-8577, Japan}}
\email{yoshikawa@ehime-u.ac.jp}

\begin{abstract}
We consider the implicit Euler approximation of the stochastic Cahn-Hilliard equation driven by
  additive Gaussian noise in a spatial domain with smooth boundary in
  dimension $d\le 3$. %
We show pathwise existence and uniqueness of
 solutions for the method under a restriction on the step size that is
 independent of the size of the initial value and of the increments of
 the Wiener process.  This result also relaxes the imposed assumption on the
 time step for the deterministic Cahn-Hilliard equation assumed in
 earlier existence proofs.
\end{abstract}
\keywords{
 Stochastic partial differential equation; Cahn-Hilliard equation; Euler method;
numerical approximation; existence proof}
\subjclass[2010]{60H35,   35R60,  65J15}

\maketitle

\section{Introduction}
Let $\mathcal{D}\subset \mathbb{R}^d$, $d\le 3$, be a bounded spatial domain with smooth boundary $\partial\cD$
and consider the stochastic Cahn-Hilliard equation written in abstract  form,
\begin{equation}\label{eq:chc}
\begin{aligned}
\dd X+A(A X+f(X))\,\dd t&=\dd W(t),~t\in(0,T],\quad X(0)=X_0,
\end{aligned}
\end{equation}
where $A$ is the realisation of the Laplace operator $-\Delta$ with
homogenous Dirichlet boundary conditions in $H=L^2(\cD)$  with inner
product $\langle\cdot\,,\cdot\rangle$ and induced norm
$\|\cdot\|$. The non-linearity $f$ is given by $f(s)=s^3-\beta^2 s$
and $W$ is an $H$-valued $Q$-Wiener process.

Existence of solutions to \eqref{eq:chc} is studied in
\cite{Prato} and spatial semi-discretisation with a finite element
method in \cite{KLM} and \cite{KLMerr}. Here, we are interested in existence and
uniqueness of the implicit Euler approximation of \eqref{eq:chc} given
by
\begin{equation}\label{eq:be}
\begin{aligned}
X^j+\dt A^2X^j+\dt Af(X^j)&=X^{j-1}+\Wj,\, j\in I_N,\quad X^0=X_0,
\end{aligned}
\end{equation}
where $I_N=\{1,\dots,N\}$, $N\in \N$, $\dt =T/N$ and $\Wj=W(t_j)-W(t_{j-1})$ for
$t_j=j\dt$, $j\in I_N\cup \{0\}$. That is, we study a temporal semi-discretisation. 

In the deterministic case, when $W=0$, existence is usually proved \cite{Stig,yoshikawa} by
the reformulation of \eqref{eq:be} as a fixed point problem in a ball
$\{\|A^{1/2}x\|\leq M\}$. If $\dt\leq
\dt_0$ the constructed mapping in the formulation
becomes a contraction and existence and uniqueness
follows. However, the constant $M$ grows and $\dt_0$ shrinks as
$\|A^{1/2}X_0\|$ grows.

In the present setting this dependence can not be allowed. At every time step, the
right hand side of \eqref{eq:be} plays the role of the initial value
and, being a Gaussian random variable, $\Wj$ may be arbitrary large
with positive probability. If we would rely on earlier existence
results we would be forced to utilise an adaptive time stepping scheme
and facing the risk of needing arbitrary small time steps. Instead, we
shall prove that the equation 
\begin{equation}\label{eq:ell}
u+\dt A^2u+\dt Af(u)=y
\end{equation}
has a solution in $H^2\cap\Hoz$ as soon as $y\in \dom(A^{-1})$, the
domain of $A^{-1}$. At each time
step, $u$ corresponds to $X^{j}$ and $y$ to $X^{j-1}+\Wj$ in
\eqref{eq:be}, so for this
assumption to hold it is sufficient  that $X_0, \Wj\in \dom(A^{-1})$,
$j\in I_N$, a.s. This holds if, e.g., $\EE \|A^{-1}X_0\|^2<\infty$ and
$\|A^{-1}Q^{1/2}\|_{\HS}<\infty$, where $\|\cdot\|_{\HS}$ denotes
the Hilbert-Schmidt norm in $H$ and $Q$ is the covariance operator of
$W$. More precisely, our main results
are the following.

\begin{thm}\label{thm:ell}
Assume $y\in \dom(A^{-1})$ and $\dt < 4/\beta^4$, then
\eqref{eq:ell} has a unique
solution $x\in H^2\cap\Hoz$.
\end{thm}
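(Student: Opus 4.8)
The plan is to remove the biharmonic term by applying $A^{-1}$ and to treat the result as a monotone operator equation. Since $y\in\dom(A^{-1})$ the function $g:=A^{-1}y$ lies in $H$, and because $A^{-1}$ is a bounded bijection of $\dom(A^{-1})$ onto $H$, solving \eqref{eq:ell} for $x\in H^2\cap\Hoz$ is equivalent to solving
\begin{equation*}
A^{-1}x+\dt Ax+\dt f(x)=g .
\end{equation*}
(Here \eqref{eq:ell} is read in $\dom(A^{-1})$: for $x\in\dom(A)$ one has $x^3\in L^2$ by the embedding $\Hoz\hookrightarrow L^6$, valid since $d\le3$, so $Ax+f(x)\in H$ and $A(Ax+f(x))\in\dom(A^{-1})$.) Put $V:=\dom(A^{1/2})=\Hoz$ and define $\cF\colon V\to V^*$ by $\cF(u)=A^{-1}u+\dt Au+\dt f(u)$, which is well defined because $A^{-1}u\in\dom(A)\subset V^*$, $Au\in V^*$, and $u^3\in L^2\hookrightarrow V^*$. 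I would then check that $\cF$ is hemicontinuous, coercive and strictly monotone, invoke the Browder--Minty theorem to produce a unique $x\in V$ with $\cF(x)=g$, and upgrade its regularity afterwards.

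The heart of the matter is the monotonicity estimate. For $u,v\in V$ set $w=u-v$; using the identities $\langle A^{-1}u,v\rangle=\langle A^{-1/2}u,A^{-1/2}v\rangle$ and $\langle Au,v\rangle=\langle A^{1/2}u,A^{1/2}v\rangle$ on $V$, and $f(s)=s^3-\beta^2 s$,
\begin{equation*}
\langle \cF(u)-\cF(v),w\rangle=\norm{A^{-1/2}w}^2+\dt\norm{A^{1/2}w}^2+\dt\langle u^3-v^3,w\rangle-\dt\beta^2\norm{w}^2 ,
\end{equation*}
and the cubic term is nonnegative since $(s^3-t^3)(s-t)\ge0$ pointwise. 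For the remaining three terms the key step is the interpolation inequality $\norm{w}^2\le\norm{A^{-1/2}w}\,\norm{A^{1/2}w}$, after which completing the square gives, with $a=\norm{A^{-1/2}w}$ and $b=\norm{A^{1/2}w}$,
\begin{equation*}
a^2+\dt b^2-\dt\beta^2 ab=\Big(a-\tfrac{\dt\beta^2}{2}b\Big)^2+\dt\Big(1-\tfrac{\dt\beta^4}{4}\Big)b^2\ \ge\ \dt\Big(1-\tfrac{\dt\beta^4}{4}\Big)\norm{A^{1/2}w}^2 ,
\end{equation*}
which is a positive multiple of $\norm{A^{1/2}w}^2$ precisely when $\dt<4/\beta^4$. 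Hence $\langle\cF(u)-\cF(v),w\rangle\ge c\norm{A^{1/2}w}^2$ with $c=\dt(1-\dt\beta^4/4)>0$, so $\cF$ is strictly monotone; since $\cF(0)=0$, the same bound with $v=0$ gives coercivity, $\langle\cF(u),u\rangle\ge c\norm{A^{1/2}u}^2$. Hemicontinuity is clear because $\tau\mapsto\langle\cF(u+\tau v),\phi\rangle$ is a polynomial in $\tau$ for $u,v,\phi\in V$.

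By the Browder--Minty theorem $\cF\colon V\to V^*$ is surjective, and strict monotonicity makes the solution $x$ of $\cF(x)=g$ unique. For the regularity, $x\in\Hoz\hookrightarrow L^6$ gives $x^3\in L^2$, so $\dt Ax=g-A^{-1}x-\dt x^3+\dt\beta^2 x\in L^2$, and elliptic regularity for the Dirichlet Laplacian on the smooth domain $\cD$ yields $x\in\dom(A)=H^2\cap\Hoz$; by the reformulation above this $x$ solves \eqref{eq:ell}. I expect the only real obstacle to be the sharp step-size bound: the indefinite term $-\dt\beta^2\norm{w}^2$ has to be absorbed, and it is exactly the zeroth-order term $x$ in \eqref{eq:ell} --- present only because of the implicit Euler discretisation --- that supplies the $\norm{A^{-1/2}w}^2$ needed for this, which is why $\dt<4/\beta^4$ depends on $\beta$ alone and not on the spectral gap of $A$. (Alternatively, one may avoid Browder--Minty and instead minimise, by the direct method, the strictly convex, coercive, weakly lower semicontinuous functional $J(u)=\tfrac12\norm{A^{-1/2}u}^2+\tfrac\dt2\norm{A^{1/2}u}^2+\tfrac\dt4\norm[L^4]{u}^4-\tfrac{\dt\beta^2}{2}\norm{u}^2-\langle g,u\rangle$, whose Euler--Lagrange equation is $\cF(u)=g$; weak lower semicontinuity of the indefinite term uses the compact embedding $\Hoz\hookrightarrow H$.)
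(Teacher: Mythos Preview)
Your proof is correct and takes a genuinely different route from the paper. The paper applies Schaefer's fixed point theorem to the linearised map $z=T_y(x)$ defined by $z+\dt A^2z+\dt A(x^2z)=y+\dt\beta^2 Ax$: it verifies that $T_y$ is well defined on $\Hoz$ via Riesz representation, compact via an $H^2$ bound and Rellich--Kondrachov, and continuous, and then derives the a~priori bound on solutions of $u=\zeta T_y(u)$, $\zeta\in[0,1]$, where the restriction $\dt<4/\beta^4$ enters. Uniqueness is extracted separately from the continuity estimate. You instead apply $A^{-1}$ once and recognise the resulting operator $\cF(u)=A^{-1}u+\dt Au+\dt f(u)$ as strictly monotone, coercive and hemicontinuous on $\Hoz$, so Browder--Minty (or, equivalently, direct minimisation of the associated strictly convex energy) gives existence and uniqueness in one stroke; the $H^2$ regularity then follows by bootstrapping as you indicate. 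The step-size condition appears in exactly the same algebraic form in both arguments---your completion of the square $a^2+\dt b^2-\dt\beta^2 ab\ge \dt(1-\dt\beta^4/4)b^2$ with $a=\|A^{-1/2}w\|$, $b=\|A^{1/2}w\|$ is the counterpart of the paper's Lemma~\ref{lem:zeta}. Your approach is shorter and exploits the monotone structure of the cubic; it also yields a constructive algorithm via convex minimisation, which the authors note is missing from the Schaefer argument. The Schaefer route, on the other hand, does not rely on monotonicity of the leading nonlinear term and would adapt more readily to perturbations that spoil convexity but preserve suitable growth bounds.
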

\begin{cor}\label{cor:be}
If, a.s., $X_0,
\,\Wj\in\dom(A^{-1})$, $j\in I_N$ and $\dt<  4/\beta^4$, then there is an a.s. unique solution to
\eqref{eq:be} with $X^j\in H^2\cap\Hoz$ for $j\in I_N$.
\end{cor}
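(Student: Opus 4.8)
The plan is to apply Theorem \ref{thm:ell} recursively in $j$, solving \eqref{eq:be} one time step at a time on a single full-measure event, and then to upgrade this pathwise construction to a statement about random variables through a measurability argument. The decisive feature, already supplied by Theorem \ref{thm:ell}, is that the step-size restriction $\dt < 4/\beta^4$ is \emph{independent of the datum} $y$. This is exactly what permits a single fixed $\dt$ to be used at every step, no matter how large the Gaussian increments $\Wj$ happen to be, and is what makes an adaptive, pathwise-varying time step unnecessary.

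First I would fix the event $\Omega_0$ on which $X_0\in\dom(A^{-1})$ and $\Wj\in\dom(A^{-1})$ for every $j\in I_N$. Each of these occurs almost surely and $I_N$ is finite, so $\Omega_0$ has probability one. On $\Omega_0$ I argue by induction on $j$: set $X^0=X_0\in\dom(A^{-1})$, and suppose $X^{j-1}\in\dom(A^{-1})$ has already been produced. Since $\dom(A^{-1})$ is a linear space, the right-hand side $y:=X^{j-1}+\Wj$ again lies in $\dom(A^{-1})$, so Theorem \ref{thm:ell}, invoked with $\dt<4/\beta^4$, furnishes a unique $X^j\in H^2\cap\Hoz$ solving \eqref{eq:ell} with this $y$, which is precisely \eqref{eq:be}. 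The step closes because $H^2\cap\Hoz=\dom(A)\subset H\subset\dom(A^{-1})$, so $X^j\in\dom(A^{-1})$ and the recursion may proceed. After $N$ steps this yields the entire sequence $(X^j)_{j\in I_N}$ with $X^j\in H^2\cap\Hoz$, and the uniqueness asserted in Theorem \ref{thm:ell} at each step gives uniqueness of the whole sequence on $\Omega_0$.

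It remains to ensure that each $X^j$ is a genuine $H^2\cap\Hoz$-valued random variable, so that ``a.s. unique solution'' is a meaningful statement rather than merely a pathwise one. Uniqueness in Theorem \ref{thm:ell} defines a map $S:\dom(A^{-1})\to H^2\cap\Hoz$, $y\mapsto x$, and it suffices to show that $S$ is Borel measurable; then $X^j=S(X^{j-1}+\Wj)$ is measurable as a composition of measurable maps, by the same induction. I would obtain measurability from continuity of $S$, established by a stability estimate that reuses the algebra of the uniqueness argument: subtracting the two instances of \eqref{eq:ell} for data $y_1,y_2$ and testing the difference $e:=x_1-x_2$ against $A^{-1}e$ produces $\|A^{-1/2}e\|^2+\dt\|A^{1/2}e\|^2$ together with the nonnegative cubic contribution $\dt\langle x_1^3-x_2^3,e\rangle$ on the left, against the linear term $\dt\beta^2\|e\|^2$ and the datum term $\langle y_1-y_2,A^{-1}e\rangle$ on the right. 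Absorbing the linear term via the interpolation inequality $\|e\|\le\|A^{1/2}e\|^{1/2}\|A^{-1/2}e\|^{1/2}$ leaves the strictly positive coefficient $1-\dt\beta^4/4$, exactly as for uniqueness, and yields a Lipschitz bound for $S$ with respect to the norm $\|A^{-1}\,\cdot\,\|$ on $\dom(A^{-1})$.

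The recursion itself is essentially bookkeeping once Theorem \ref{thm:ell} is in hand, so the main point requiring genuine attention is this last step: verifying measurability of the solution operator $S$. Establishing its continuity through the stability estimate above is where I would concentrate the work, taking care that the datum term $\langle y_1-y_2,A^{-1}e\rangle$ is controlled in the $\dom(A^{-1})$-norm and not in a stronger norm that $y_1-y_2\in\dot H^{-2}$ need not possess; if continuity into the full $H^2\cap\Hoz$-norm proves awkward, weak measurability together with separability of $H^2\cap\Hoz$ suffices via the Pettis theorem, and alternatively one may pass to the measurable limit of the Galerkin approximants underlying the proof of Theorem \ref{thm:ell}.
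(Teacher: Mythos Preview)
Your inductive argument on a full-measure event is exactly the approach the paper has in mind; indeed, the paper gives no separate proof of the corollary and treats it as an immediate consequence of Theorem~\ref{thm:ell} via the discussion in the introduction, so your first two paragraphs already constitute the intended proof.

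Your measurability discussion goes beyond what the paper addresses. The title emphasises \emph{pathwise} existence, and the corollary is stated in that spirit; the paper does not verify that the $X^j$ are genuine random variables. Your stability estimate for the solution map $S$ is correct and yields continuity (hence measurability), and it is a worthwhile addition. One small correction: you refer in passing to ``the Galerkin approximants underlying the proof of Theorem~\ref{thm:ell}'', but the paper's proof proceeds via Schaefer's fixed point theorem, not a Galerkin scheme, so that alternative route to measurability is not directly available from the paper's argument.
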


We shall prove the existence part of Theorem \ref{thm:ell} by applying
Schaefer's fixed point theorem to the mapping $z=T_y(x)$ given by
\begin{equation}\label{eq:fp1}
z+\dt A^2z+\dt Azx^2=y+\dt\beta^2Ax.
\end{equation} 
Clearly, a fixed point, $z=x$, of \eqref{eq:fp1} is a
solution of \eqref{eq:ell}.

The outline is as follows. In Section \ref{sec:prel} we give some
necessary definitions and state
Schaefer's fixed point theorem and other required results. In
Section \ref{sec:proof} we give the mapping $T_y$  a rigorous meaning
and show that it fulfils the assumptions of Schaefer's fixed point
theorem.

\section{Preliminaries}\label{sec:prel}

We shall use the abbreviation $L_p=L_p(\cD)$ for the standard
function spaces on $\cD$ and $H^r=H^r(\cD)$ refers to the usual
Sobolev spaces with all partial derivatives of order $\leq r$ being
square integrable. The space $\Hoz$ is the completion of
$C^\infty_b(\cD)$ in $H^1$. It hold that the operator $A$ with $\Dom(A)=\Hoz\cap H^2$ has strictly positive eigenvalues
$\lambda_1<\lambda_2\leq\ldots$ diverging to infinity so any real power $A^{s}$ may be defined and $A^{s}$ is
positive definite and self-adjoint with $\dom(A^{s/2})=:\dH^{s}$.   If  $s_1\geq s_2$ then
$\dH^{s_1}\subset\dH^{s_2}$ and 
\begin{equation}\label{eq:Ass}
\|A^{s_2/2}x\|\leq
\lambda_1^{(s_1-s_2)/2}\|A^{s_1/2}x\|.
\end{equation}
 In particular,
$\Hoz=\dH^{1}$ and $H^{-1}:=(\Hoz)^*=\dH^{-1}$. The space
$\Hoz$ is a Hilbert space with the inner product
$\langle\cdot,\cdot\rangle_1:=\langle
A^{1/2}\cdot,A^{1/2}\cdot\rangle$. %
More generally, we have the
family of inner products $\langle \cdot,\cdot \rangle_{s}:=\langle
A^{s/2}\cdot, A^{s/2}\cdot \rangle$ and induced norms
$\|\cdot\|_{s}=\langle \cdot,\cdot\rangle_s^{1/2}$ on
$\dot{H}^s$. We shall use
$\langle\cdot\,,\cdot\rangle$ also for the duality pairing
of $\dH^{s}$ and $\dH^{-s}$.

We will frequently utilise the embeddings $\Hoz\subset L_6\subset L_3$ with
\begin{equation}\label{eq:sob}
c\|u\|_{L_3}\leq \|u\|_{L_6}\leq C\|u\|_1,
\end{equation}
and the resulting inequality
\begin{equation}\label{eq:sobres}
\|u\|_{-1}\leq C\|u\|_{L_{6/5}}.
\end{equation}
The first inequality in \eqref{eq:sob}
holds in arbitrary spatial dimension $d$ while the latter and
\eqref{eq:sobres} hold for $d\leq 3$. See \cite[Lemma 2.5]{KLM} for a proof of
a finite dimensional version, the proof in our case is almost identical. We also have
\begin{align}
|\langle u^2v,w\rangle|&\leq \|u\|_{L_3}^2\|v\|_{L_6}\|w\|_{L_6},\label{eq:Hold1}\\
\|u^2v\|&\leq \|u\|_{L_6}^2\|v\|_{L_6}\text{ and}\label{eq:Hold2}\\
\|uvw\|_{-1}&\leq C\|u\|_{L_6}\|v\|_{L_3}\|w\|_{L_3}, \quad (d\leq 3)\label{eq:Hold3}
\end{align}
where \eqref{eq:Hold1} and \eqref{eq:Hold2} holds for arbitrary $d$ as being consequences of
H\"older's inequality. The third, \eqref{eq:Hold3}, is a consequence
  of \eqref{eq:sobres} and H\"older's inequality.%

The following theorem can be found in
\cite[Theorem 4, Section 9.2]{evans}.
\begin{thm}[Schaefer's fixed point theorem]
Assume that $X$ is a real Banach space and that $T\colon X\rightarrow
X$ is a continuous, compact mapping. If the set  $\cF=\cup_{\lambda\in [0,1]}\{u\in
\Hoz:u=\lambda T u\}$ is bounded, then $T$ has a fixed point.
\end{thm}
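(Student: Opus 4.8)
The plan is to deduce Schaefer's theorem from Schauder's fixed point theorem, which guarantees that every continuous, compact self-map of a nonempty, closed, bounded, convex subset of a Banach space has a fixed point. The boundedness of $\cF$ will be invoked precisely to prevent the fixed point from escaping to the boundary of a ball after we truncate $T$.

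First I would use the hypothesis to fix a constant $M>0$ with $\norm{u}\leq M$ for every $u\in\cF$, and I would work in the closed, convex, bounded set $B=\{u\in X:\norm{u}\leq M+1\}$. Define the radial retraction $r\colon X\to B$ by $r(u)=u$ when $\norm{u}\leq M+1$ and $r(u)=(M+1)u/\norm{u}$ otherwise; this map is continuous and sends all of $X$ into $B$. Setting $T^\ast=r\circ T$, the composition $T^\ast\colon B\to B$ is continuous, and it is compact because $T$ is compact and $r$ is continuous, so $T^\ast(B)$ is relatively compact. Schauder's fixed point theorem then provides a point $u^\ast\in B$ with $u^\ast=T^\ast(u^\ast)$.

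The decisive step is to verify that $u^\ast$ is in fact a fixed point of $T$ itself. I would argue by contradiction, supposing $\norm{T(u^\ast)}>M+1$. Then $r$ acts nontrivially at $T(u^\ast)$, whence $u^\ast=T^\ast(u^\ast)=\lambda\,T(u^\ast)$ with $\lambda=(M+1)/\norm{T(u^\ast)}\in(0,1)$ and $\norm{u^\ast}=M+1$. But this places $u^\ast$ in $\cF$, forcing $\norm{u^\ast}\leq M$ and contradicting $\norm{u^\ast}=M+1$. Hence $\norm{T(u^\ast)}\leq M+1$, so $r$ is the identity at $T(u^\ast)$ and $u^\ast=T(u^\ast)$.

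I expect the only genuinely substantial ingredient to be Schauder's fixed point theorem, the infinite-dimensional, compactness-based extension of Brouwer's theorem; granting it, everything else is elementary. The point requiring care is the contradiction step: the truncation radius $M+1$ must exceed the $\cF$-bound $M$ strictly, so that any boundary fixed point of the truncated map would produce an element of $\cF$ whose norm is too large, which is exactly the contradiction that closes the argument.
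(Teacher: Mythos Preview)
Your argument is correct and is essentially the classical derivation of Schaefer's theorem from Schauder's theorem via the radial retraction; the contradiction step is handled properly by choosing the truncation radius strictly larger than the a~priori bound $M$.

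Note, however, that the paper does not supply its own proof of this statement: it merely quotes Schaefer's fixed point theorem as a known result, with a reference to Evans' textbook. So there is no in-paper proof to compare against. What you have written is in fact the standard textbook proof (and is the one given in Evans), so in that sense your approach coincides with the cited source.
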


The results does not rely on any probabilistic arguments in addition
to the ones used in the introduction. We refer the 
reader to \cite{DPZ}  (the Hilbert-Schmidt norm is defined in Appendix C)
 and \cite{PR}.

\section{Proof of the main theorem}\label{sec:proof}
To make sure that $T_y\colon \Hoz\rightarrow \Hoz$ is well-defined for
every $y\in \dH^{-2}$, in fact even for every $y\in \dH^{-3}$, we let $z=T_y(x)$ be such that for every
$v\in \Hoz$, 
\begin{equation}\label{eq:inner}
\langle z, v\rangle_{-1}+\dt\big( \langle z,v\rangle_1+\langle xz ,x
  v\rangle\big)=\langle \dt\beta^2 x + A^{-1} y, v\rangle.
\end{equation}
This is of the form
$B_x(z,v)=L_{y,x}(v)$ where $B_x$ is an inner product
and $L_{y,x}$ is a bounded linear functional on $\Hoz$ if
$x\in L_3$ and $y\in\dH^{-3}$. That $B_x$ has the claimed domain follows from \eqref{eq:Hold1} %
and that $\Hoz\subset
  L_6\subset L_3$. From \eqref{eq:Hold1} and \eqref{eq:sob},
  we get that $\dt\|u\|^2_1\leq
B_x(u,u)\leq\left(\lambda_1^{-2}+\dt+C \|x\|_{L_3}^2\right)\|u\|_1^2$.
 Thus,
$(\Hoz,B_x)$ is a Hilbert space. Lemma \ref{lem:def} is then immediate from
Riesz representation theorem.
\begin{lem}\label{lem:def}
If $x\in L_3$ and $y\in \dH^{-3/2}$ then \eqref{eq:inner} has a
unique solution $z\in \Hoz$. In particular, $T_y$  is well defined
as a mapping on $\Hoz$.
\end{lem}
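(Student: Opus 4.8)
The plan is to verify the two hypotheses needed to invoke the Riesz representation theorem on the space $\Hoz$ equipped with the claimed inner product $B_x$: namely, that $B_x(\cdot,\cdot)$ really is an inner product that makes $\Hoz$ a Hilbert space (i.e., the induced norm is equivalent to $\norm[1]{\cdot}$), and that $v\mapsto L_{y,x}(v):=\inner{\dt\beta^2 x + A^{-1}y}{v}$ is a bounded linear functional on $\Hoz$. Granting these, \eqref{eq:inner} reads $B_x(z,v)=L_{y,x}(v)$ for all $v\in\Hoz$, and Riesz gives a unique $z\in\Hoz$ representing $L_{y,x}$ with respect to $B_x$; this $z$ is by definition $T_y(x)$, so $T_y$ is well-defined on all of $\Hoz$.

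For the bilinear form, I would first observe that bilinearity and symmetry of $B_x(z,v)=\inner[-1]{z}{v}+\dt\big(\inner[1]{z}{v}+\inner{xz}{xv}\big)$ are clear, and that each term is finite when $x\in L_3$ and $z,v\in\Hoz$: the first two are obvious since $\Hoz=\dH^1\subset\dH^{-1}$, and the cross term is controlled by \eqref{eq:Hold1}, $\abs{\inner{xz}{xv}}=\abs{\inner{x^2 z}{v}}\le\norm[L_3]{x}^2\norm[L_6]{z}\norm[L_6]{v}$, which is finite and, via \eqref{eq:sob}, bounded by $C\norm[L_3]{x}^2\norm[1]{z}\norm[1]{v}$. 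Combining this with \eqref{eq:Ass} applied to the $\dH^{-1}$ term gives the two-sided estimate already recorded in the excerpt, $\dt\norm[1]{u}^2\le B_x(u,u)\le(\lambda_1^{-2}+\dt+C\norm[L_3]{x}^2)\norm[1]{u}^2$; the lower bound in particular shows $B_x$ is positive definite (so an honest inner product) and that the $B_x$-norm is equivalent to $\norm[1]{\cdot}$, hence $(\Hoz,B_x)$ is complete.

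For the functional, linearity is immediate and boundedness splits into two pieces. The term $\inner{\dt\beta^2 x}{v}$ is handled by Cauchy--Schwarz in $H$ together with $\norm{x}\le c^{-1}\norm[L_3]{x}$ (or directly $\abs{\inner{x}{v}}\le\norm[L_{6/5}]{x}\norm[L_6]{v}$ if one prefers to use only $x\in L_3$ via $L_3\hookrightarrow L_{6/5}$ on the bounded domain $\cD$); either way it is bounded by $C\norm[L_3]{x}\norm[1]{v}$. The term $\inner{A^{-1}y}{v}$ is a duality pairing: $\abs{\inner{A^{-1}y}{v}}=\abs{\inner[-3/2]{y}{\;}}$-type estimate, more precisely $\abs{\inner{A^{-1}y}{v}}=\abs{\inner{A^{-3/2}y}{A^{1/2}v}}\le\norm{A^{-3/2}y}\,\norm{A^{1/2}v}=\norm[-3]{y}\norm[1]{v}$, which is finite for $y\in\dH^{-3}$ and a fortiori for $y\in\dH^{-3/2}$. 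Thus $\abs{L_{y,x}(v)}\le C(\norm[L_3]{x},\norm[-3]{y})\norm[1]{v}$, so $L_{y,x}\in(\Hoz)^*$.

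The only mild subtlety — and the point I would state carefully rather than a genuine obstacle — is bookkeeping of the regularity index on $y$: the statement of Lemma~\ref{lem:def} asks for $y\in\dH^{-3/2}$, while the preceding discussion notes well-definedness even for $y\in\dH^{-3}$. Since $\dH^{-3/2}\subset\dH^{-3}$ by \eqref{eq:Ass}, the weaker hypothesis $y\in\dH^{-3/2}$ certainly suffices for the bound $\abs{\inner{A^{-1}y}{v}}\le\norm[-3/2]{y}\,\norm[2]{v}$ is not what we need; rather we only need $\norm{A^{-1}y}_{\dH^{-1}}<\infty$, i.e. $\norm{A^{-3/2}y}<\infty$, i.e. $y\in\dH^{-3}$, and this is implied by $y\in\dH^{-3/2}$. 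Once the two estimates above are in place, Riesz representation applied in $(\Hoz,B_x)$ finishes the proof.
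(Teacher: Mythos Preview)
Your proposal is correct and follows exactly the approach of the paper: the paper also writes \eqref{eq:inner} as $B_x(z,v)=L_{y,x}(v)$, verifies via \eqref{eq:Hold1} and \eqref{eq:sob} the two-sided bound $\dt\norm[1]{u}^2\le B_x(u,u)\le(\lambda_1^{-2}+\dt+C\norm[L_3]{x}^2)\norm[1]{u}^2$ so that $(\Hoz,B_x)$ is a Hilbert space, asserts that $L_{y,x}$ is a bounded linear functional for $x\in L_3$ and $y\in\dH^{-3}$, and then declares the lemma ``immediate from Riesz representation theorem.'' Your write-up simply supplies the details the paper leaves implicit (including the observation that the stated hypothesis $y\in\dH^{-3/2}$ is stronger than the $y\in\dH^{-3}$ actually needed for $L_{y,x}$ to be bounded).
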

We now let $z$ be this solution and consider the system of equations
\begin{align}
\dt Aw&=-z+y\label{eq:sys1}\\
Au&=w-x^2z+\beta^2x.\label{eq:sys2}
\end{align}
By standard elliptic theory, \eqref{eq:sys1} has a unique weak
solution $w\in H$ as soon as $y\in \dH^{-2}$. We then get
a unique weak solution $u\in\Hoz$ to \eqref{eq:sys2} if also $x\in
L_3$. We leave to the reader to check that $u=z$. From \eqref{eq:sys2} and \cite [Theorem 4, Section 6.3]{evans}
\begin{equation}\label{eq:H2}
\|z\|_{H^2}\leq C\|w-x^2z+\beta^2x\|\leq C\left(\|w\|+\|x^2z\|+\|x\|\right).
\end{equation}     
Taking $v=z$ in \eqref{eq:inner} , using the positivity of the third
term in the left hand side, the self-adjointness of $A^{1/2}$ and 
H\"older's  and Cauchy's inequalities
we compute
\begin{equation*}
\begin{aligned}
&\|z\|_{-1}^2+\dt \|z\|_1^2+\dt \|zx\|^2=\dt
\beta^2\langle x,z\rangle+\langle A^{-1}y,z\rangle\\
&\,=\dt
\beta^2\langle A^{-1/2} x,A^{1/2}z\rangle+\langle
A^{-3/2}y,A^{1/2}z\rangle \leq \epsilon \dt\|z\|_1^2 +C\left(\dt \|x\|_{-1}^2+\dt^{-1}\|y\|^2_{-3} \right)
\end{aligned}
\end{equation*}
whith $C=C_\epsilon$. Clearly, there is an $\epsilon >0$ such that 
\begin{equation}\label{eq:zb}
\|z\|^2_{-1}+\dt\|z\|_1^2\leq C\left(\dt\|x\|_{-1}^2+\dt^{-1}\|A^{-3/2}y\|^2 \right).
\end{equation}
It follows from \eqref{eq:sys1}, the properties of $A$ and
\eqref{eq:zb} that
\begin{equation}\label{eq:wb}
\begin{aligned}
\dt\|w\|&\leq \|A^{-1}z\|+\|A^{-1}y\|\leq
\lambda_1^{-1/2}\|z\|_{-1}+\|A^{-1}y\|\\
&\leq C\left( \dt^{1/2}\|x\|_{-1}+\dt^{-1/2}\|A^{-3/2}y\| +\|A^{-1}y\|\right).
\end{aligned}
\end{equation}
From \eqref{eq:Hold2}, \eqref{eq:sob} and \eqref{eq:zb} we also get
\begin{equation}\label{eq:nonlint}
\|x^2z\|\leq \|x\|^2_{L_6}\|z\|_{L_6}\leq
C\|x\|^2_{1}\|z\|_{1} \leq C\|x\|^2_{1}\left(\|x\|_{-1}+\dt^{-1}\|A^{-3/2}y\|\right).
\end{equation}
Insert \eqref{eq:wb} and \eqref{eq:nonlint} into \eqref{eq:H2},
use \eqref{eq:Ass} and Young's inequality, to find that
\begin{equation}\label{eq:H2bnd}
\begin{aligned}
\|z\|_{H^2}&\leq C\Big(\dt^{-1/2}\|x\|_{-1}+\dt^{-3/2}\|A^{-3/2}y\| +\dt^{-1}\|A^{-1}y\| +\\
&+ \|x\|^2_{1}\big(\|x\|_{-1}+\dt^{-1}\|A^{-3/2}y\|\big) +\|x\|\Big)
\leq C_k\big(\|x\|^3_{1}+\|A^{-1}y\|^3\big).
\end{aligned}
\end{equation}
Compactness of $T_y$ then follows from Kondrachov-Rellich's
compactness theorem \cite[Theorem 1, Section 5.7]{evans}. %
   We have the following lemma.
\begin{lem}\label{lem:comp}
If $y\in \dH^{-2}$, then $T_y$ is a compact mapping from $\Hoz$
to $\Hoz$.
\end{lem}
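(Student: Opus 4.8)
The plan is to prove Lemma~\ref{lem:comp} by showing that $T_y$ maps bounded sets of $\Hoz$ into bounded sets of $H^2\cap\Hoz$, and then invoking the compact embedding $H^2\cap\Hoz\hookrightarrow\Hoz$ together with continuity of $T_y$. The uniform bound is precisely the content of the estimate \eqref{eq:H2bnd}, derived above: for $x$ in a bounded subset of $\Hoz$, the right-hand side $C_k(\|x\|_1^3+\|A^{-1}y\|^3)$ is bounded (recall $y\in\dH^{-2}$ implies $\|A^{-1}y\|<\infty$, since $\dH^{-2}=\dom(A^{-1})$), so $T_y$ maps the ball into a bounded subset of $H^2\cap\Hoz$.

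The steps I would carry out, in order: first, record that Lemma~\ref{lem:def} already guarantees $T_y$ is a well-defined map $\Hoz\to\Hoz$ for $y\in\dH^{-2}\subset\dH^{-3/2}$. Second, collect the chain of a priori estimates \eqref{eq:zb}, \eqref{eq:wb}, \eqref{eq:nonlint}, \eqref{eq:H2} and conclude \eqref{eq:H2bnd}, which shows $z=T_y(x)\in H^2\cap\Hoz$ with norm controlled by a polynomial in $\|x\|_1$ and $\|A^{-1}y\|$; hence $T_y(B)$ is bounded in $H^2\cap\Hoz$ whenever $B\subset\Hoz$ is bounded. Third, apply the Kondrachov--Rellich theorem \cite[Theorem 1, Section 5.7]{evans}, which gives that $H^2\cap\Hoz$ embeds compactly into $\Hoz=\dH^1$ (indeed $H^2\hookrightarrow\hookrightarrow H^1$ on the bounded domain $\cD$), so $\overline{T_y(B)}$ is compact in $\Hoz$. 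Fourth, verify continuity of $T_y\colon\Hoz\to\Hoz$: if $x_n\to x$ in $\Hoz$, then setting $z_n=T_y(x_n)$, the bound \eqref{eq:H2bnd} shows $(z_n)$ is bounded in $H^2\cap\Hoz$, so along a subsequence $z_n\rightharpoonup z_*$ weakly in $\Hoz$ and strongly in $L_6$; passing to the limit in the weak formulation \eqref{eq:inner} — using $x_n\to x$ in $L_6$ to handle the trilinear term $\langle x_nz_n,x_nv\rangle$ — identifies $z_*=T_y(x)$, and since the limit is independent of the subsequence, $z_n\to T_y(x)$ weakly; upgrading to strong convergence in $\Hoz$ follows either from the compact embedding applied to the full sequence or from testing the difference equation. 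Finally, a continuous map whose image of bounded sets is relatively compact is by definition a compact mapping, completing the proof.

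The main obstacle is the continuity verification in the fourth step: while the uniform $H^2$-bound and compactness are essentially immediate from \eqref{eq:H2bnd} and Kondrachov--Rellich, one must carefully pass to the limit in the nonlinear term of \eqref{eq:inner}. The relevant control is $|\langle x_nz_n,x_nv\rangle - \langle xz,xv\rangle|$, which splits using \eqref{eq:Hold1} into terms involving $\|x_n-x\|_{L_3}\|x_n\|_{L_6}\|z_n\|_{L_6}\|v\|_{L_6}$ and $\|x\|_{L_3}^2\|z_n-z_*\|_{L_6}\|v\|_{L_6}$; the first vanishes by $x_n\to x$ in $L_3$ (from \eqref{eq:sob}) together with the uniform $\Hoz$-bound on $z_n$, and the second vanishes by $z_n\to z_*$ strongly in $L_6$, which is exactly what the compact embedding $H^2\cap\Hoz\hookrightarrow\hookrightarrow L_6$ (or even $\Hoz\hookrightarrow\hookrightarrow L_6$ by Kondrachov) provides along the subsequence. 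The linear terms pass to the limit trivially since the bilinear form $\langle\cdot,\cdot\rangle_{-1}+\dt\langle\cdot,\cdot\rangle_1$ is continuous on $\Hoz$. Since everything needed is already available from the estimates and embeddings stated in the excerpt, the argument is routine but must be assembled with care.
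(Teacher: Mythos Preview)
Your core argument---the $H^2$-bound \eqref{eq:H2bnd} combined with the Kondrachov--Rellich compact embedding $H^2\hookrightarrow\hookrightarrow \Hoz$---is exactly the paper's proof of Lemma~\ref{lem:comp}; the paper establishes \eqref{eq:H2bnd} in the paragraphs preceding the lemma and then invokes Kondrachov--Rellich in a single sentence.

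The difference is that you fold continuity into this lemma, whereas the paper does not: in the paper's formulation of Schaefer's theorem, ``continuous'' and ``compact'' are listed as separate hypotheses, so Lemma~\ref{lem:comp} only asserts that bounded sets map to relatively compact sets, and continuity is deferred to the next lemma (Lemma~\ref{lem:cont}). There the paper proves continuity by a direct energy estimate---subtracting the two weak formulations for $z_1=T_y(x_1)$ and $z_2=T_y(x_2)$, testing with $z_1-z_2$, and absorbing the cross term via \eqref{eq:Hold3}---which yields a quantitative bound on $\|z_1-z_2\|_1$ in terms of $\|x_1-x_2\|_1$. Your subsequence/identification argument is also correct and is in some sense more robust (it reuses the compactness already in hand), but the paper's direct estimate is sharper and, as a bonus, later provides uniqueness in Theorem~\ref{thm:ell} via \eqref{eq:finalbnd}. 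One minor slip: in your H\"older splitting of the trilinear term the exponents $3,6,6,6$ do not sum correctly; use $3,3,6,6$ (or $6,6,3,3$) instead.
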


We now want to verify  that  $T_y$ is continuous.
\begin{lem}\label{lem:cont}
The mapping $T_y$ is continuous on $\Hoz$ if $y\in \dH^{-3}$.
\end{lem}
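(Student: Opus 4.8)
The plan is to prove sequential continuity on $\Hoz$. Let $x_n\to x$ in $\Hoz$ and set $z_n:=T_y(x_n)$, $z:=T_y(x)$, both well defined by Lemma \ref{lem:def}, and $e_n:=z_n-z$. Since $x_n\to x$ we may fix $M$ with $\sup_n\|x_n\|_1\le M<\infty$ (and hence $\|x\|_1\le M$). The goal is to show $\|e_n\|_1\to0$.

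First I would subtract the identity \eqref{eq:inner} defining $z$ from the one defining $z_n$. The term $\langle A^{-1}y,v\rangle$ is common to both and cancels, so $y$ drops out of the equation for $e_n$; using $\langle x_nz_n,x_nv\rangle=\langle x_n^2z_n,v\rangle$ together with the algebraic identity $x_n^2z_n-x^2z=x_n^2e_n+(x_n-x)(x_n+x)z$, one obtains for every $v\in\Hoz$
\[
\langle e_n,v\rangle_{-1}+\dt\langle e_n,v\rangle_1+\dt\langle x_n^2e_n,v\rangle=\dt\beta^2\langle x_n-x,v\rangle-\dt\langle(x_n-x)(x_n+x)z,v\rangle.
\]
Testing with $v=e_n$, the term $\dt\langle x_n^2e_n,e_n\rangle=\dt\|x_ne_n\|^2\ge0$ is discarded. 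The linear term on the right is bounded, as in the derivation of \eqref{eq:zb}, by $\beta^2\|x_n-x\|_{-1}\|e_n\|_1$, and the cubic term by duality together with \eqref{eq:Hold3} and \eqref{eq:sob},
\[
|\langle(x_n-x)(x_n+x)z,e_n\rangle|\le\|(x_n-x)(x_n+x)z\|_{-1}\|e_n\|_1\le C\|x_n-x\|_{L_6}\|x_n+x\|_{L_3}\|z\|_{L_3}\|e_n\|_1\le CM\|z\|_1\|x_n-x\|_1\|e_n\|_1 .
\]
Absorbing the factors $\|e_n\|_1$ with Young's inequality and bounding $\|x_n-x\|_{-1}$ by $C\|x_n-x\|_1$ via \eqref{eq:sob}, one is left with
\[
\|e_n\|_{-1}^2+\frac{\dt}{2}\|e_n\|_1^2\le C\|x_n-x\|_1^2\longrightarrow0 ,
\]
where $C$ depends only on $M$, $\|z\|_1$, $\dt$ and $\beta$. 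Hence $z_n\to z$ in $\Hoz$, which is the assertion; note that $y$ enters only through the well-posedness of $z_n$ and $z$, the final estimate being independent of $y$.

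The computation is routine once the decomposition $x_n^2z_n-x^2z=x_n^2e_n+(x_n-x)(x_n+x)z$ is in place. The only point needing care is to keep the dangerous factor $e_n$, whose $\Hoz$-norm is not yet controlled, attached either to the sign-definite quantity $\|x_ne_n\|^2$ or to a single absorbable power $\|e_n\|_1$, while the small factor $x_n-x$ always occupies an $L_6$ slot and the bounded factors $x_n+x$ and $z$ occupy $L_3$ slots; this is exactly the configuration for which \eqref{eq:Hold3} was recorded, so I do not expect a genuine obstacle.
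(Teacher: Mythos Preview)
Your argument is correct and follows essentially the same route as the paper: subtract the two instances of \eqref{eq:inner}, test with the difference, use the same algebraic decomposition $x_n^2z_n-x^2z=x_n^2e_n+(x_n-x)(x_n+x)z$ so that one piece yields the sign-definite term $\|x_ne_n\|^2$ and the other is controlled via \eqref{eq:Hold3}, then absorb. The only cosmetic differences are that the paper works directly with two points $x_1,x_2$ rather than a sequence, and that the bound $\|x_n-x\|_{-1}\le C\|x_n-x\|_1$ comes from \eqref{eq:Ass}, not \eqref{eq:sob}.
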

\begin{proof}
Take  $x_1$ and $x_2$ in $\wH^1$
and let $z_1=T_y(x_1)$ and $z_2=T_y(x_2)$. Consider these equations of
the form \eqref{eq:inner} and subtract the latter
 from the former, using $v=z_1-z_2$. We then arrive at
\begin{equation}\label{eq:conteq}  
\begin{aligned}
&\|z_1-z_2\|^2_{-1}+k\|z_1-z_2\|_1^2+k\langle
z_1x_1^2-z_2x_2^2,z_1-z_2\rangle\\
&\quad=k\beta^2\langle x_1-x_2,z_1-z_2
\rangle\leq C\|x_1-x_2\|^2_{1}+\frac12\|z_1-z_2\|^2_{-1}
\end{aligned}
\end{equation}
after also invoking H\"older's and Cauchy's inequalities. Note that 
$z_1x_1^2-z_2x_2^2 =x_1^2(z_1-z_2)+(x_1-x_2)z_2(x_1+x_2)$
and thus
\begin{equation}\label{eq:nonlin}
\langle z_1x_1^2-z_2x_2^2,z_1-z_2\rangle=\|x_1(z_1-z_2)\|^2+\langle (x_1-x_2)z_2(x_1+x_2),z_1-z_2\rangle.
\end{equation}
Further, using H\"older's and Cauchy's inequalities and
\eqref{eq:Hold3}  we get
\begin{equation}\label{eq:nonlinbnd1}
\begin{aligned}
&|\langle (x_1-x_2)z_2(x_1+x_2),z_1-z_2\rangle| =|\langle A^{-1/2}
(x_1-x_2)z_2(x_1+x_2),A^{1/2}(z_1-z_2)\rangle|\\
&\quad\leq \frac12\|
(x_1-x_2)z_2(x_1+x_2)\|_{-1}^2+\frac12\|z_1-z_2\|_1^2\\
&\quad \leq
  C\|x_1-x_2\|_{L_{6}}(\|z_2\|^2_{L_6}+\|x_1\|^2_{L_6}+\|x_2\|^2_{L_6})+\frac12\|z_1-z_2\|_1^2.
\end{aligned}
\end{equation}
Inserting \eqref{eq:nonlin} into \eqref{eq:conteq}, rearranging and applying
\eqref{eq:nonlinbnd1} we find that
\begin{equation}\label{eq:finalbnd}
\begin{aligned}
&\frac12\|z_1-z_2\|_{-1}^2+k\|z_1-z_2\|_1^2+\dt\|x_1(z_1-z_2)\|^2 \\%\leq \dt |\langle
&\, \leq C\big(\dt\|x_1-x_2\|_{L_{6}}(\|z_2\|^2_{L_6}+\|x_1\|^2_{L_6}+\|x_2\|^2_{L_6})+\|x_1-x_2\|^2_{1}\big)+\frac\dt2\|z_1-z_2\|_1^2.
\end{aligned}
\end{equation}
Subtracting $\frac{\dt}2\|z_1-z_2\|_1^2$ from both sides and multiplying
by 2 we conclude that %
\begin{equation*}
\begin{aligned}
k\|z_1-z_2\|_1^2\leq
C\Big(\|x_1-x_2\|_{L_{6}}\big(\|z_2\|^2_{L_6}+\|x_1\|^2_{L_6}+\|x_2\|^2_{L_6}\big) +\|x_1-x_2\|^2_{1}\Big)
\end{aligned}
\end{equation*}
after also  dropping redundant terms in the left hand side.
As $x_1$, $x_2$ are in $\Hoz$ by assumption and $z_2$  is in $\Hoz$
by \eqref {eq:H2bnd},  it follows from \eqref{eq:sob} that
$T_y$ is continuous on $\Hoz$. 
\end{proof}
\begin{lem}\label{lem:zeta}
Assume that $4\dt \beta^4<1$ and
that $y\in \dom(A^{-3/2})$. If $\zeta\in [0,1]$ and $u=\zeta
T_y(u)$, then for some $M>0$ it must hold that $\|u\|_1\leq M\|A^{-3/2}y\|$.
\end{lem}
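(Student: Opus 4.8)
The plan is to convert the fixed‑point relation $u=\zeta T_y(u)$ into a variational identity for $u$ alone, test it against $u$, and absorb the resulting indefinite term; the smallness condition $4\dt\beta^4<1$ will be exactly what guarantees a positive leftover coefficient of $\|u\|_1^2$ without generating any $y$‑independent additive constant.

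First I would dispose of $\zeta=0$, for which $u=0$ and the bound is trivial. For $\zeta\in(0,1]$ the element $z:=\zeta^{-1}u=T_y(u)$ is defined and, by \eqref{eq:inner} with $x=u$, satisfies $\langle z,v\rangle_{-1}+\dt\langle z,v\rangle_1+\dt\langle uz,uv\rangle=\langle\dt\beta^2u+A^{-1}y,v\rangle$ for all $v\in\Hoz$. Multiplying by $\zeta$ and using $\zeta z=u$ (and $\Hoz\subset L_3$, so that every term makes sense) gives
\[
\langle u,v\rangle_{-1}+\dt\langle u,v\rangle_1+\dt\langle u^2,uv\rangle=\zeta\dt\beta^2\langle u,v\rangle+\zeta\langle A^{-1}y,v\rangle,\qquad v\in\Hoz .
\]
Taking $v=u$, discarding the nonnegative term $\dt\langle u^2,u^2\rangle$, using $\zeta\in[0,1]$, and writing $\langle A^{-1}y,u\rangle=\langle A^{-3/2}y,A^{1/2}u\rangle$, I would obtain
\[
\|u\|_{-1}^2+\dt\|u\|_1^2\le\dt\beta^2\|u\|^2+\|A^{-3/2}y\|\,\|u\|_1 .
\]

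The essential difficulty — and what distinguishes this lemma from the estimate behind \eqref{eq:zb} — is that here $x=u$ and $z=\zeta^{-1}u$ are proportional, so the quadratic term appears as $\dt\beta^2\|u\|^2$ and cannot merely be absorbed into $\dt\|u\|_1^2$; it has to be controlled using the \emph{whole} left‑hand side. The plan is to use the interpolation inequality $\|u\|^2=\langle A^{1/2}u,A^{-1/2}u\rangle\le\|u\|_1\|u\|_{-1}$ together with Young's inequality in the forms $\dt\beta^2\|u\|_1\|u\|_{-1}\le\frac12\|u\|_{-1}^2+\frac12\dt^2\beta^4\|u\|_1^2$ and $\|A^{-3/2}y\|\,\|u\|_1\le\frac1\dt\|A^{-3/2}y\|^2+\frac\dt4\|u\|_1^2$. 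Substituting and rearranging yields
\[
\tfrac12\|u\|_{-1}^2+\dt\Big(\tfrac34-\tfrac12\dt\beta^4\Big)\|u\|_1^2\le\tfrac1\dt\|A^{-3/2}y\|^2 .
\]
Since $4\dt\beta^4<1$, the coefficient $\tfrac34-\tfrac12\dt\beta^4$ exceeds $\tfrac12$, so discarding the first term gives $\|u\|_1\le M\|A^{-3/2}y\|$ with $M:=\dt^{-1}\big(\tfrac34-\tfrac12\dt\beta^4\big)^{-1/2}$.

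I expect the only delicate point to be this final absorption: one must keep the coefficient of $\|u\|_1^2$ strictly positive after subtracting the contributions of both $\dt\beta^2\|u\|^2$ and $\|A^{-3/2}y\|\,\|u\|_1$, and the bound must remain homogeneous in $y$ — which is why $\|u\|^2$ is estimated by interpolation rather than, say, by $\|u\|^2\le|\cD|^{1/2}\|u\|_{L_4}^2$, the latter introducing a constant term. Everything else is a routine verification, in particular that $z=\zeta^{-1}u$ indeed solves \eqref{eq:inner} with $x=u$.
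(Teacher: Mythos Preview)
Your proof is correct and follows essentially the same route as the paper's: test the variational identity with $v=u$, control $\dt\beta^2\|u\|^2$ via the interpolation $\|u\|^2\le\|u\|_1\|u\|_{-1}$ and Young's inequality, and use the smallness of $\dt\beta^4$ to keep the coefficient of $\|u\|_1^2$ positive. The only cosmetic difference is in the Young weights chosen---the paper absorbs the full $\|u\|_{-1}^2$ (yielding the sharper threshold $\dt\beta^4<4$ that matches Theorem~\ref{thm:ell}), whereas your choice absorbs half of it and needs $\dt\beta^4<3/2$, but both are covered by the lemma's hypothesis $4\dt\beta^4<1$.
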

\begin{proof}
It is trivial for $\zeta=0$ so assume $0<\zeta\leq 1$ and write $\frac u\zeta=T(u)$ and substitute $z$ for $\frac u\zeta$ and
$x$ for $u$ in \eqref{eq:inner} and take $v=u$. %
Then,
\begin{equation*}
\frac 1\zeta \left(\|u\|^2_{-1}+k\|u\|_1^2+k\|u\|^4_{L_4}\right)=\langle
  y, u\rangle_{-1}+k\beta^2\|u\|^2.
\end{equation*}
After multiplication with $\zeta$ and similar arguments as above we get
\begin{equation*}
\begin{aligned}
&\|u\|^2_{-1}+k\|u\|_1^2\leq \frac
C\epsilon\|y\|^2_{-3}+\epsilon
\zeta^2\|u\|_1^2+\|u\|^2_{-1}+\frac{(\zeta k)^2 \beta^4}4\|u\|_1^2:=G(\zeta).%
\end{aligned}
\end{equation*}
It holds that $\sup_{0<\zeta\leq 1}G(\zeta)=G(1)$ so with $\zeta=1$ we see that under the assumption on $\dt$ we may pick
$0<\epsilon<\dt(1-\dt\beta^4/4)$ to achieve the desired result.
\end{proof}
\begin{proof}[Proof of Theorem \ref{thm:ell}]
Existence in $\Hoz$ follows immediately from Lemmata \ref{lem:comp},
-- \ref{lem:zeta} and uniqueness from \eqref{eq:finalbnd}. That the
solution is in $H^2$ is a result of \eqref{eq:H2bnd}. 
\end{proof}
\section{Extensions and future work}
The method above
generalises to e.g. homogeneous Neumann boundary conditions as in
\cite{Stig} and to arbitrary odd order polynomial $f$
with positive leading coefficient, cf. \cite{Prato},
but the  target non-linearity in the  Cahn-Hilliard context, the
logarithmic potential $f(s)=\log\left((1+s)/(1-s)\right)-\beta^2s$ remains a
challenge. %

Error analysis for the stochastic Cahn-Hilliard equation is
 performed in \cite{CHC}. A proof of strong convergence
inspired by \cite{KLLAC}, where the stochastic Allen-Cahn
(SAC) equation is treated, is given. %
To show the rate of convergence remains a challenge 
(see 
 \cite{KLLpre} for the SAC equation). So does fully discrete schemes. %

A drawback with the proof in this paper is that it does not
come with a constructive algorithm to find a solution. When Banach's
fixed point theorem is utilised fixed point iteration comes for free. With Schaefer's fixed point
theorem this is no longer the case and a numerical method must be
given and analysed.

\section*{Acknowledgements}
F. Lindgren was supported by JSPS KAKENHI Grant Number 15K45678.
\footnotesize
\setlength{\bibitemsep}{0pt}
\printbibliography 

\end{document}